\newcommand{\titel}{
Corrigendum on Wiener index, Zagreb Indices and Harary index of Eulerian graphs}
\theoremstyle{plain}
	\newtheorem{satz}{Satz}[]
	\newtheorem{theorem}[satz]{Theorem}
		\newtheorem{claim}[satz]{Claim}
\theoremstyle{remark}
\theoremstyle{definition}
\newcommand*{\myproofname}{Proof}
\newenvironment{claimproof}[1][\myproofname]{\begin{proof}[#1]}{\end{proof}}
\newtheorem*{claim*}{Claim}
\newcommand*{\ceilfrac}[2]{\mathopen{}\left\lceil\frac{#1}{#2}\right\rceil\mathclose{}}
\newcommand*{\floorfrac}[2]{\mathopen{}\left\lfloor\frac{#1}{#2}\right\rfloor\mathclose{}}
\newcommand*{\abs}[1]{\lvert #1\rvert}
\DeclareMathOperator{\ecc}{ecc}
\begin{document}
	\title{\titel}
		\author{
Stijn Cambie \thanks{Department of Computer Science, KU Leuven Campus Kulak-Kortrijk, 8500 Kortrijk, Belgium. Supported by Internal Funds of KU Leuven (PDM fellowship PDMT1/22/005).}
		}
	\date{}
	\maketitle

\begin{abstract}
    In the original article ``Wiener index of Eulerian graphs'' [Discrete Applied Mathematics
Volume 162, 10 January 2014, Pages 247-250], the authors state that the Wiener index (total distance) of an Eulerian graph is maximized by the cycle. We explain that the initial proof contains a flaw and note that it is a corollary of a result by Plesn{\'{\i}}k, since an Eulerian graph is $2$-edge-connected.
    The same incorrect proof is used in two referencing papers, 
    `Zagreb Indices and Multiplicative Zagreb Indices of Eulerian Graphs'' [Bull. Malays. Math. Sci. Soc. (2019) 42:67–78]
and ``Harary index of Eulerian graphs'' [J. Math. Chem., 59(5):1378–1394, 2021].
We give proofs of the main results of those papers and the $2$-edge-connected analogues.
\end{abstract}

\section*{Introduction}
The Wiener index or total distance $W(G)$ of a graph $G$ is the sum of all distances in a graph.
This graph parameter has been the inspiration for many interesting questions~\cite{KST23}. Together with the related average distance, it is the most fundamental distance-based topological index of a graph. 
An other distance-based topological index is the Harary index $H(G)$, which is the sum of reciprocals of the distances; $H(G)=\sum_{\{u,v\} \in \binom V2} \frac 1{d(u,v)}$.

The first Zagreb index, or the sum of squares of degrees, of a graph $G=(V,E)$ equals $M_1(G)=\sum_{v \in V} \deg^2(v).$
This is a rather natural degree-based topological index, and many more variants exist, e.g. $M_2(G)=\sum_{uv \in E} \deg(u)\deg(v), \Pi_1(G) = \prod_{v \in V} \deg(v) $ and $\Pi_2(G) = \prod_{v \in V} \deg(v)^{\deg(v)}.$

Some of the questions on the extremal behaviour for such topological indices are still open, while other questions turn out to boil down to known cases~\cite{SC21}. As such, the study should mainly focus on the most fundamental questions, that give the necessary insight for the related questions.

An Eulerian graph is a connected graph for which every vertex has even degree. This is equivalent to containing a closed walk (Eulerian cycle). The class of Eulerian graphs is, being related to the K\"onisberg Bridge problem and thus the origin of graph theory, one of the oldest graph classes.

In this article, we observe and fix mistakes in a few papers working on the above topological indices of Eulerian graphs, and generalize them to $2$-edge-connected graphs. 
%The main results can be summarized 

\begin{theorem}
    Let $G$ be a Eulerian (or $2$-edge-connected) graph of order $n$,
    then $$W(G) \le W(C_n), H(G) \ge H(C_n) \mbox{ and } M_1(G) \ge M_1(C_n)$$
    with equality if and only if $G \cong C_n.$ 
\end{theorem}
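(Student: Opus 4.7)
I would handle the three inequalities separately.

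For $W(G) \le W(C_n)$: as the abstract observes, this is an immediate corollary of Plesn\'{\i}k's $1984$ theorem applied to the $2$-edge-connected graph $G$, so I would simply invoke it.

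For $M_1(G) \ge M_1(C_n) = 4n$: every vertex of a $2$-edge-connected graph has degree at least $2$, hence $\deg^2(v) \ge 2\deg(v)$. Summing and using $2\abs{E(G)} = \sum_v \deg(v) \ge 2n$ yields
\[
M_1(G) \;\ge\; 2 \sum_v \deg(v) \;=\; 4\abs{E(G)} \;\ge\; 4n \;=\; M_1(C_n).
\]
Equality forces $\deg(v) = 2$ for every $v$ and $\abs{E(G)} = n$, so $G$ is $2$-regular and connected, i.e.\ $G \cong C_n$.

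For $H(G) \ge H(C_n)$, the delicate part, my plan is to establish the stronger pair-count dominance $N_k(G) \le N_k(C_n)$ for every $k \ge 1$, where $N_k(H) := \abs{\{\{u,v\} \in \binom{V(H)}{2} : d_H(u,v) \ge k\}}$. This simultaneously yields $W(G) \le W(C_n)$ via $d = \sum_{k\ge 1}\mathbf{1}[d \ge k]$ (so $W = \sum_k N_k$) and $H(G) \ge H(C_n)$ via $\tfrac{1}{d} = 1 - \sum_{k\ge 2} \tfrac{1}{k(k-1)} \mathbf{1}[d \ge k]$ with strictly positive coefficients $\tfrac{1}{k(k-1)}$, giving $H(G) = \binom{n}{2} - \sum_{k \ge 2} N_k(G)/(k(k-1))$. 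For $k > \lfloor n/2 \rfloor$ both sides vanish by Plesn\'{\i}k's diameter bound: in a $2$-edge-connected graph $G$ any two vertices lie on a common cycle of length at most $n$, whose shorter $u$--$v$ arc gives $d_G(u,v) \le \lfloor n/2 \rfloor$.

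The main obstacle is the range $2 \le k \le \lfloor n/2 \rfloor$. The per-vertex inequality $\abs{B_{k-1}(v)} \ge 2k-1$ can fail in a $2$-edge-connected graph (low-degree vertices on long ``dead-end'' ears may violate it), so the argument must be global. I would target the summed form $\sum_v \abs{B_{k-1}(v)} \ge n(2k-1)$, which is immediate when $G$ contains a Hamilton cycle (since then $C_n \subseteq G$ spanning gives $d_G \le d_{C_n}$ pointwise) and, in the non-Hamiltonian case, would be established by a structural induction on an ear decomposition of $G$, tracking how distances redistribute when each new ear is attached. Equality in all three inequalities forces the distance multiset of $G$ to coincide with that of $C_n$, which combined with the degree equality analysis forces $G \cong C_n$.
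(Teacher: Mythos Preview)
Your treatments of $W$ and $M_1$ are correct and essentially match the paper's: Plesn\'{\i}k's theorem for the Wiener index, and the observation that minimum degree~$2$ forces $M_1(G)\ge 4n$ for the Zagreb index.

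The Harary argument, however, contains a genuine error. You assert that in a $2$-edge-connected graph any two vertices lie on a common cycle of length at most $n$, hence are at distance at most $\lfloor n/2\rfloor$. That is a $2$-\emph{vertex}-connected property, not a $2$-edge-connected one: Menger only gives two \emph{edge}-disjoint $u$--$v$ paths, which may share internal vertices and need not form a cycle. Concretely, let $G$ be two copies of $C_4$ identified at a single vertex ($n=7$); $G$ is $2$-edge-connected, yet the two vertices antipodal to the cut-vertex in their respective $4$-cycles are at distance $4>\lfloor 7/2\rfloor$. Thus $N_4(G)=1>0=N_4(C_7)$, so the level-by-level dominance $N_k(G)\le N_k(C_n)$ on which your entire Harary plan rests is simply false in the $2$-edge-connected setting. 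The proposed ``structural induction on an ear decomposition'' cannot salvage it, because you would be inducting towards a false statement; the failure is not in the method but in the target inequality itself.

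The paper's route is genuinely different. It first settles the $2$-connected case by exactly the per-vertex ball bound you dismissed: if $v$ is not a cut-vertex, then for every $1\le i\le\ecc(v)-1$ there are at least two vertices at distance $i$ from $v$, so the distance profile from each vertex is dominated by that in $C_n$ (and hence your $N_k$-dominance \emph{does} hold in the $2$-connected case). For the general $2$-edge-connected case it takes a minimum counterexample, picks a cut-vertex $v$ splitting $G$ into a $2$-connected block $G_1$ and a smaller $2$-edge-connected piece $G_2$, applies induction to each, and then compares the full multiset $S$ of distances (within $G_1$, within $C_{\lvert G_2\rvert}$, and across $v$) against the multiset $T$ of distances in $C_n$. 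The crucial point is not that $S$ is dominated by $T$ level-by-level --- it isn't --- but that every element of $S\setminus T$ is strictly smaller than every element of $T\setminus S$, so the reciprocals in $S\setminus T$ outweigh those in $T\setminus S$. That asymmetry is exactly what makes the Harary inequality go through despite the failure of $N_k$-dominance.
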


Note that the bounds in the other direction are trivial to prove, and attained by $K_n$ ($n$ odd) or $K_n \setminus M$ (complement of a perfect matching, if $n$ is even).

\section*{Wiener index of Eulerian graphs}

The main result~\cite[Thr.~5]{GCR14} states that the Wiener index (total distance) of an Eulerian graph of order $n$ is maximized by the cycle $C_n$. 

\begin{theorem}\label{thr:W}
    For every Eulerian graph $G$ of order n, $W(G) \le W(C_n)$ with equality if and only if $G \cong C_n.$
\end{theorem}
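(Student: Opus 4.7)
The plan is to reduce the theorem to the classical theorem of Plesn\'ik, which asserts that among all $2$-edge-connected graphs on $n$ vertices the cycle $C_n$ is the unique maximizer of the Wiener index. Granting that result, the only thing left to verify is that every Eulerian graph is $2$-edge-connected, a fact already advertised in the abstract.

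First, I would check this $2$-edge-connectivity. Let $G$ be an Eulerian graph and suppose, for contradiction, that removing an edge $e=uv$ disconnects $G$ into components $G_1 \ni u$ and $G_2 \ni v$. In $G_1$, every vertex keeps its (even) degree from $G$ except for $u$, whose degree decreases by exactly~$1$ and is therefore odd. Then $G_1$ would contain exactly one vertex of odd degree, violating the handshake lemma. Hence no edge of $G$ is a bridge, so $G$ is $2$-edge-connected.

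Second, I would invoke Plesn\'ik's theorem to obtain $W(G)\le W(C_n)$ with equality iff $G\cong C_n$. The Eulerian version is then immediate, since the class of Eulerian graphs of order $n$ is a subclass of the $2$-edge-connected graphs of order $n$ and contains $C_n$ (for $n\ge 3$, the only case where the statement is nontrivial).

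The main obstacle in this strategy is essentially nonexistent, precisely because Plesn\'ik has already done the hard work; the only point deserving care is that his original statement includes both the inequality and the uniqueness of the extremal graph, rather than the inequality alone. If one wanted an entirely self-contained proof, the true difficulty would be to reprove Plesn\'ik's bound directly — most likely by the standard edge-deletion or BFS-layering argument bounding $\sum_v d(u,v)$ from any fixed $u$ by the corresponding quantity in $C_n$ — which is noticeably more involved than the elementary reduction above.
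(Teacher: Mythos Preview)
Your proposal is correct and matches the paper's own argument essentially verbatim: the paper likewise observes that an Eulerian graph is $2$-edge-connected and then cites Plesn\'ik's theorem (including the uniqueness clause) to conclude. Your added handshake-lemma verification of $2$-edge-connectedness is a welcome detail the paper only asserts.
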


Despite being cited over $40$ times, a flaw in the reasoning in its proof was not spotted before.
The initial idea proposed to prove this, is that deleting edges increases the Wiener index. By this observation, one can conclude that the Eulerian graph is edge-minimal, i.e. no strict subgraph of the extremal graph(s) can be spanning (connected and of order $n$) and Eulerian.
Since the cycle $C_n$ is not the only edge-minimal Eulerian graph of order $n$ (once $n \ge 5$), the conclusion cannot be drawn.

Here we note that Theorem~\ref{thr:W} is true, being a corollary of~\cite[Thr.~6]{Plensik84}.
An Eulerian graph $G$ of order $n$, having a closed walk, is $2$-edge-connected. Hence by~\cite[Thr.~6]{Plensik84} its Wiener index is bounded by $W(C_n)$, with equality if and only if $G=C_n$. Since $C_n$ is Eulerian, we conclude.
The latter was also observed by~\cite{Dankelmann21}, who also characterized the Eulerian graphs with second largest Wiener index.

\section*{Zagreb Indices and Multiplicative Zagreb Indices
of Eulerian Graphs}

The authors of~\cite{LWWW19} determined the Eulerian graphs with minimum Zagreb index using the same strategy as~\cite{GCR14}, which is again not correct (despite $>100$ citations), but the result~\cite[Thr.~6]{LWWW19} is.

\begin{theorem}
     For every Eulerian graph $G$ of order n, $$M_1(G) \ge M_1(C_n), M_2(G) \ge M_2(C_n), \Pi_1(G) \le \Pi_1(C_n) \mbox{ and } \Pi_2(G) \ge \Pi_2(C_n)$$ with equality if and only if $G \cong C_n.$
\end{theorem}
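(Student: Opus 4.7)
The plan is to handle all four Zagreb-type inequalities by a single uniform observation: since $G$ is Eulerian, every vertex has positive even degree, so $\deg(v) \ge 2$ for all $v \in V(G)$, and hence $|E(G)| = \tfrac12\sum_v \deg(v) \ge n$; equality in either estimate characterizes $C_n$ among connected graphs. Because each of $M_1$, $M_2$, $\Pi_1$, $\Pi_2$ is a sum or product of per-vertex or per-edge degree terms that are monotone on $[2,\infty)$, this single estimate is the workhorse for the extremal value at $C_n$.

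I would first dispatch the three clearly-lower-bound inequalities. For $M_1(G)=\sum_v \deg(v)^2$, each summand is at least $2^2=4$, giving $M_1(G)\ge 4n = M_1(C_n)$ with equality iff every degree is $2$, i.e.\ $G \cong C_n$. For $\Pi_2(G)=\prod_v \deg(v)^{\deg(v)}$, strict monotonicity of $x\mapsto x^x$ on $[2,\infty)$ makes each factor at least $4$, so $\Pi_2(G)\ge 4^n=\Pi_2(C_n)$ with the same equality analysis. For $M_2(G)=\sum_{uv\in E}\deg(u)\deg(v)$, combining $\deg(u)\deg(v)\ge 4$ per edge with $|E(G)|\ge n$ yields $M_2(G)\ge 4|E(G)|\ge 4n=M_2(C_n)$; equality forces both $|E(G)|=n$ and every edge to lie between degree-$2$ vertices, hence $G\cong C_n$.

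The main obstacle is precisely the $\Pi_1$ clause as typeset. Any serious attempt to prove the literal statement $\Pi_1(G)\le \Pi_1(C_n)$ has to confront the fact that the same vertex-by-vertex estimate $\deg(v)\ge 2$ multiplied over $V(G)$ gives $\Pi_1(G)=\prod_v \deg(v)\ge 2^n=\Pi_1(C_n)$, the reverse of what is asserted. Worse, the literal reading is inconsistent with the stated equality clause: any Eulerian graph with a vertex of degree at least $4$ already satisfies $\Pi_1(G)\ge 2^{n-1}\cdot 4>2^n$, so no non-cycle Eulerian graph can even be a candidate to witness $\Pi_1(G)\le \Pi_1(C_n)$. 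My plan is therefore to flag this as a typographical slip in the statement and to supply, by the same one-line argument used for the other three indices, the intended bound $\Pi_1(G)\ge \Pi_1(C_n)$ with equality iff $G\cong C_n$; this is the only version of the $\Pi_1$ inequality that is both true for all Eulerian $G$ and compatible with the ``cycle is extremal'' pattern recorded for $M_1$, $M_2$ and $\Pi_2$.
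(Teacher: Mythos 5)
Your argument for the $M_1$, $M_2$ and $\Pi_2$ clauses is correct and coincides with the paper's own proof: minimum degree $2$ together with size at least $n$, each estimate tight precisely for $C_n$. You are also right to flag the $\Pi_1$ clause: with $\Pi_1(G)=\prod_{v}\deg(v)$ as defined in the introduction, the typeset inequality $\Pi_1(G)\le\Pi_1(C_n)$ fails for every non-cycle Eulerian graph (e.g.\ $K_5$ gives $4^5>2^5$), and the paper's own proof --- which asserts that ``every quantity has to be minimal'' and later speaks of the \emph{second-minimal} value of $\Pi_1$ --- implicitly establishes the corrected direction $\Pi_1(G)\ge\Pi_1(C_n)=2^n$ exactly as you do, so the $\le$ is indeed a typographical slip for $\ge$.
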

All inequalities are immediately true since an Eulerian graph has minimum degree $2$ and at least size $n,$ with equality if and only if it is a cycle.
As such $M_1(G) \ge n\cdot 2^2=4n$ and analogous.
Since the essence is that every quantity has to be minimal, also the inequality case is immediate.

%\subsection*{The second minimal case}

If not all the degrees are equal to $2$, the degree sequence of the Eulerian graph majorizes $(4, \underbrace{2,2,\ldots,2}_{n-1 \text{ times}})$ and equality can only occur by two cycles with one vertex in common. As such, it is immediate that $M_1, \Pi_1$ and $\Pi_2$ attain the second-minimal value by these graphs.
Furthermore, the number of edges is at least $n+1$. 
As such, for the minimum of $M_2$, every edge contributes at least $4$, and at least $4$ edges contribute at least $8.$
This immediately implies that $M_2(G) \ge 4n+20$ with equality if and only if the Eulerian graph has $n+1$ edges.
A proof for $M_1$ and $M_2$ is also written down in detail in~\cite[Thr.~3.4]{LMC17} for the second-smallest case.

%\underline{Unicyclic graphs}

In~\cite[Thr.~3.3]{LMC17}, it is observed that one can consider a spanning unicyclic subgraph of an Eulerian graph to conclude the main case as a corollary of known results (without using the minimum degree being $2$).
For a unicyclic graph, the result is immediate by applying $QM-AM$ or the Cauchy-Schwarz inequality;
$\sum_v \deg^2(v) \ge \frac{ (\sum_v \deg(v) )^2}n=4n.$
Equality is true if and only if all degrees are equal to $2.$
For $M_2(G),$ one can start from a cycle and iteratively add one pendent edge between a new vertex $u$ and a vertex $v$ (belonging to the connected graph) at a time and note that the sum will increase with at least $4$ (at least $7$ in the first step), since the product on the added edge is at least $ 2$ and $\deg(v)$ increases with at least $1$, while it already had a neighbour with degree at least $2.$

\section*{Harary index of Eulerian graphs}

In~\cite[Thr.~6]{CWZ21}, an analogous incomplete proof has been stated. Deleting the largest number of edges does not imply the result, and the composition of $K_n$ into disjoint cycles does not imply that there is no other Eulerian subgraph, composed by other cycles, for which the Harary index is smaller.

As such, a proof that the Harary index is minimum for the cycle is lacking.
In this case, the authors observed that the case for the second-smallest was tricky.
Nevertheless, the given graph $G$ in~\cite[Fig.~1]{CWZ21} satisfies $H(G)=10\left(1+\frac 12\right)+4\left(\frac 13+ \frac14 \right)=\frac{52}{3}$ instead of $\frac{95}{6}$. As such, it is not the Eulerian graph of order $8$ with with second-minimal Harary index. The latter is attained by the composition of a $C_6$ and a $C_3$ with one vertex in common.

The fact that this case is harder, can be demonstrated by comparing the following bicyclic and tricyclic graphs for sufficiently large $n$;
\begin{itemize}
    \item $G_1$ being the composition of $C_{n-6}$ and $C_7$ with one vertex in common
    \item $G_2$ being a composition of a $C_{n-6}$ and $2$ $C_4$s
\end{itemize}
These two graphs are presented in~\cref{fig:graphs} (when $n=18)$.
Let $h$ be the composition of the two $2$ $C_4$s with a vertex in common.
Now one can compute that, for $n\ge 2k+1$ and $k \ge 10$, 
\begin{align*}
    H(G_1)-H(G_2)&\ge H(C_7)-H(h)+2\sum_{i=1}^{k -3} \left( \frac{1}{i+2}-\frac1{i+4} \right)\\
    %&=7\cdot \frac{11}{6}- 8(1+1/2)-4/3-1/4
    &=-\frac 34+2\left(\frac 13 +\frac14 - \frac{1}{k}-\frac{1}{k+1}\right)>0.
\end{align*}
The latter since the distances between a vertex $u$ of the $C_{n-6}$ and a vertex $v_i$ of the $C_7$ resp. $h$ are equal, except for $v_3$. In that case, $d_{G_2}(u,v_3)=d_{G_1}(u,v_3)+2=d_{G_1}(u,v_1)+4.$
The tricyclic graph $G_2$ has thus smaller Harary index than the bicyclic $G_1$ when $n \ge 21$ (actually for $n \ge 20$).

\begin{figure}[ht]
\centering
\begin{tikzpicture}
% Draw the C12 cycle
\foreach \x in {0,30,...,330} {
    \draw[fill] (\x:2) circle (0.05);
    \draw (\x:2) -- ({\x+30}:2);
}

% Draw the C7 cycle and label the vertices
\foreach \i/\label in {0/v_1,1/v_2,2/v_3,3/v_4,4/v_5,5/v_6,6/v_7} {
    \draw[fill] ({-3.5+1.5*cos(\i*360/7)},{1.5*sin(\i*360/7)}) circle (0.05);
    \draw ({-3.5+1.5*cos(\i*360/7)},{1.5*sin(\i*360/7)}) -- ({-3.5+1.5*cos(\i*360/7+360/7)},{1.5*sin(\i*360/7+360/7)});
    \node at ({-3.5+1.8*cos(\i*360/7)},{1.8*sin(\i*360/7)}) {$\label$};
}
\end{tikzpicture}\quad
\begin{tikzpicture}
\foreach \x in {0,30,...,330} {
    \draw[fill] (\x:2) circle (0.05);
    \draw (\x:2) -- ({\x+30}:2);
}

\foreach \i/\label in {0/v_1,1/v_2,2/,3/v_7} {
    \draw[fill] ({-3+1*cos(\i*360/4)},{1*sin(\i*360/4)}) circle (0.05);
    \draw ({-3+1*cos(\i*360/4)},{1*sin(\i*360/4)}) -- ({-3+1*cos(\i*360/4+90)},{1*sin(\i*360/4+90)});
    \node at ({-3+1.3*cos(\i*360/4)},{1.3*sin(\i*360/4)}) {$\label$};
}

\foreach \i/\label in {0/v_6,1/v_4,2/,3/v_5} {
    \draw[fill] ({-5+1*cos(\i*360/4)},{1*sin(\i*360/4)}) circle (0.05);
    \draw ({-5+1*cos(\i*360/4)},{1*sin(\i*360/4)}) -- ({-5+1*cos(\i*360/4+90)},{1*sin(\i*360/4+90)});
    \node at ({-5+1.3*cos(\i*360/4)},{1.3*sin(\i*360/4)}) {$\label$};
}
\node at ({-5+0.7*cos(2*360/4)},{0.7*sin(2*360/4)}) {$v_3$};
\end{tikzpicture}

\caption{$G_1$ and $G_2$ for $n=18$}\label{fig:graphs}
\end{figure}
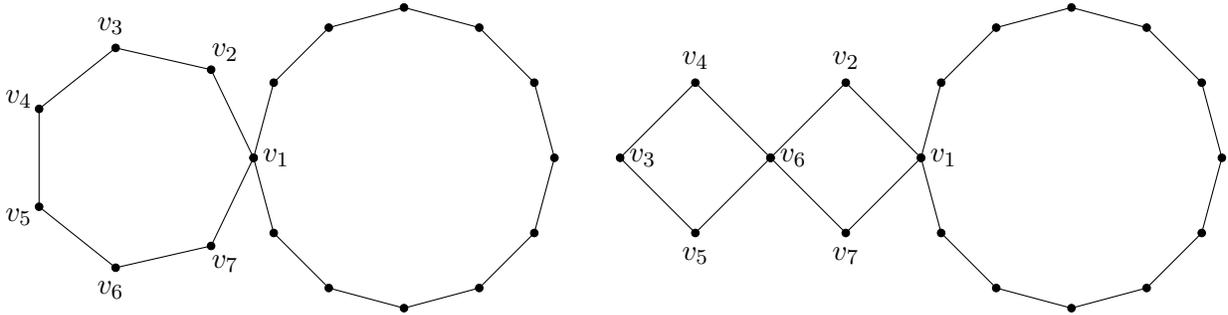

Finally, we prove the main theorem.
We first prove an analogue for $2$-connected graphs.

\begin{theorem}\label{thr:main_2conn}
    If $G=(V,E)$ is a $2$-connected graph on $n$ vertices, then $H(G) \ge H(C_n).$ Equality occurs if and only $G\cong C_n.$
\end{theorem}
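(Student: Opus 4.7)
The plan is to adapt the Plesn\'ik-style argument used for the Wiener index. Fix an arbitrary $v \in V(G)$; I would show that the sorted sequence of distances from $v$ to the other $n-1$ vertices is dominated coordinate-wise by the analogous sorted sequence from a vertex of $C_n$, and then convert this into the Harary inequality using monotonicity of $x \mapsto 1/x$. The whole argument reduces to the following ball-growth lemma: for every $v \in V(G)$ and every integer $k \ge 0$, the closed ball $B_k(v) := \{u \in V : d_G(v,u) \le k\}$ satisfies $|B_k(v)| \ge \min\{n,\, 2k+1\}$, which is exactly the size of a closed $k$-ball in $C_n$.

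I would prove the lemma via Menger's theorem, and I expect this to be the main technical step. If $B_k(v) = V$ there is nothing to show, so assume there exists $w$ with $d_G(v,w) \ge k+1$. By $2$-connectivity, there are two internally disjoint $(v,w)$-paths $P_1, P_2$, each of length at least $d_G(v,w) \ge k+1$. Walking $k$ steps from $v$ along each $P_i$ produces $k+1$ vertices of $B_k(v)$; since $P_1$ and $P_2$ share only their endpoints, the two collections overlap only in $v$ itself, yielding $2k+1$ distinct vertices in $B_k(v)$. Everything downstream of this Menger argument is bookkeeping.

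The lemma gives $d_i \le d_i^{C_n}$ for every $i$, where $(0, d_1, \dots, d_{n-1})$ is the sorted sequence of distances from $v$ in $G$ and $d_i^{C_n}$ is the analogous sequence in $C_n$. Since $1/x$ is strictly decreasing on $(0, \infty)$, this gives
$$\sum_{u \ne v} \frac{1}{d_G(v,u)} \;\ge\; \sum_{u \ne v'} \frac{1}{d_{C_n}(v',u)}$$
for any $v' \in V(C_n)$. Summing over $v \in V(G)$ and dividing by $2$ yields $H(G) \ge H(C_n)$. For the equality case: if $H(G) = H(C_n)$, the per-vertex inequality is tight for every $v$, which by strict monotonicity of $1/x$ forces the sorted distance sequences to agree coordinate-wise. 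In particular, the number of vertices at distance $1$ from $v$ equals $2$, so $\deg(v) = 2$ for every $v$; a connected $2$-regular graph on $n$ vertices is $C_n$, so $G \cong C_n$.
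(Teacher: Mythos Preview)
Your proof is correct and follows essentially the same approach as the paper: show that the sorted distance sequence from each vertex is dominated by the corresponding sequence in $C_n$, sum over vertices, and deduce the equality case from $\deg(v)=2$ for all $v$. The only cosmetic difference is in how the distance-layer bound is established---the paper observes directly that a unique vertex at distance $i<\ecc(v)$ from $v$ would be a cutvertex, which is slightly more elementary than your route through Menger's theorem but leads to the same conclusion.
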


\begin{proof}
    Take a vertex $v \in G.$
    Let the eccentricity of $v$ be $\ecc(v)=\max_{u \in V} d(u,v)$.
    Note that for every $1\le i \le \ecc(v)-1,$ there are at least $2$ vertices $u$ for which $d(u,v)=i$, as a single solution $u$ would be a cutvertex of $G$.
    This implies that the sequence $(d(u,v))_{u \in V \setminus v}$ is dominated by $\{1,1,2,2,3, \ldots, \floorfrac{n-1}2,\ceilfrac{n-1}{2}\}.$
    This implies that $\sum_{u \in V \setminus v} \frac{1}{d(u,v)} \ge \frac{2}{n} H(C_n).$ Summing over all $v \in V,$ we conclude that $H(G) \ge H(C_n).$
    If equality occurs, $\deg(v)=2$ $\forall v \in V$ and so $G \cong C_n.$ 
\end{proof}

Next, we prove the theorem for $2$-edge-connected graphs and thus also for Eulerian graphs.

\begin{theorem}\label{thr:main_2edgeconn}
    If $G=(V,E)$ is a $2$-edge-connected graph on $n$ vertices, then $H(G) \ge H(C_n).$ Equality occurs if and only $G\cong C_n.$
\end{theorem}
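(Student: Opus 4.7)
The plan is strong induction on $n$, reducing to Theorem~\ref{thr:main_2conn} when $G$ is $2$-vertex-connected. For $n \le 3$ the only $2$-edge-connected simple graph is $C_3$, so the base case is trivial.

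For the inductive step, if $G$ is $2$-vertex-connected, Theorem~\ref{thr:main_2conn} applies directly. Otherwise $G$ has a cutvertex $c$; let $A_1, \ldots, A_k$ (with $k \ge 2$) be the components of $G - c$ and set $G_j := G[A_j \cup \{c\}]$. Each $G_j$ is $2$-edge-connected on $n_j < n$ vertices (any cycle of $G$ containing an edge of $G_j$ visits $c$ at most once, hence is confined to $G_j$), so the induction hypothesis gives $H(G_j) \ge H(C_{n_j})$. As every $x$-$y$ path with $x \in A_j$, $y \in A_{j'}$, $j \ne j'$ must traverse $c$, the Harary index splits as
\[
H(G) \;=\; \sum_{j=1}^{k} H(G_j) \;+\; \sum_{j < j'}\, \sum_{\substack{x \in A_j \\ y \in A_{j'}}} \frac{1}{d_{G_j}(x,c) + d_{G_{j'}}(c,y)},
\]
so combined with the inductive bounds the step reduces to the cross-term inequality
\[
\sum_{j < j'}\, \sum_{\substack{x \in A_j \\ y \in A_{j'}}} \frac{1}{d_{G_j}(x,c) + d_{G_{j'}}(c,y)} \;\ge\; H(C_n) - \sum_{j=1}^{k} H(C_{n_j}), \qquad n = 1 + \sum_j (n_j-1).
\]

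The main obstacle is this cross-term estimate. The per-vertex-from-$c$ bound that powered the proof of Theorem~\ref{thr:main_2conn} can genuinely fail for a $2$-edge-connected graph --- a vertex opposite the cutvertex in the figure-eight example from the previous section witnesses this --- so the distance multiset $(d_{G_j}(x, c))_{x \in A_j}$ may dominate the corresponding $C_{n_j}$-from-a-vertex profile, making the cross sum strictly smaller than its flower-of-cycles value. I plan to handle this by first reducing to $k = 2$, absorbing components one at a time and tracking how both sides of the target inequality change; and then verifying the $k = 2$ inequality by a direct harmonic-sum bound that combines (i) a lower bound on the reciprocal-distance sum from $c$ inside each $G_j$ obtained by applying the BFS-level argument of Theorem~\ref{thr:main_2conn} on each block of $G_j$ (which gives at least two vertices per BFS level of $G_j$'s sub-blocks), and (ii) an explicit comparison of the resulting double sum with $H(C_n) - H(C_{n_1}) - H(C_{n_2})$, written out via harmonic numbers.

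For the equality characterisation, strict positivity of the $k = 2$ harmonic-sum comparison whenever $k \ge 2$ forces $k = 1$, which returns us to the $2$-vertex-connected regime where the equality case of Theorem~\ref{thr:main_2conn} gives $G \cong C_n$.
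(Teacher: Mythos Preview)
Your overall architecture --- induction on $n$, dispose of the $2$-connected case via Theorem~\ref{thr:main_2conn}, and split at a cutvertex otherwise --- matches the paper. The gap is exactly where you flag it: the cross-term inequality
\[
\sum_{j < j'}\sum_{x\in A_j,\,y\in A_{j'}}\frac{1}{d_{G_j}(x,c)+d_{G_{j'}}(c,y)}\ \ge\ H(C_n)-\sum_j H(C_{n_j})
\]
is the entire content of the inductive step, and you only describe a plan for it. The reduction from general $k$ to $k=2$ is not carried out, and steps (i)--(ii) are too vague to evaluate: ``applying the BFS-level argument on each block of $G_j$'' does not obviously yield a usable lower bound on $\sum_{x\in A_j}1/d_{G_j}(x,c)$, precisely because (as you observe) the distance profile from $c$ inside a $2$-edge-connected $G_j$ can strictly dominate the $C_{n_j}$ profile. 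So you have correctly located the difficulty but not resolved it; as written the proposal is an outline, not a proof.

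The paper avoids having to prove your cross-term inequality in this generality. It chooses the cutvertex $v$ so that one side $G_1$ is an \emph{end-block}, hence genuinely $2$-connected; then the BFS argument of Theorem~\ref{thr:main_2conn} applies to $G_1$ and shows that replacing $G_1$ by $C_{|G_1|}$ simultaneously decreases $H(G_1)$ and the cross contribution, so one may assume $G_1=C_{|G_1|}$. For the other side $G_2$ the paper does \emph{not} try to control its internal distance profile; it only uses the trivial path bound $(d(u,v))_{u\in G_2\setminus v}\preceq(1,2,\dots,|G_2|-1)$ for the cross terms together with the inductive bound $H(G_2)\ge H(C_{|G_2|})$ for the internal terms, and then compares the resulting multiset of distances against that of $C_n$ by an explicit counting claim. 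The asymmetric treatment of the two sides --- one forced to be a cycle, the other bounded by a path profile --- is what makes the multiset comparison go through; your symmetric decomposition into $\sum_j H(C_{n_j})$ plus a residual discards exactly this leverage.
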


\begin{proof}
    If $G$ is $2$-connected, we conclude by Theorem~\ref{thr:main_2conn}. 
    Suppose by the contrary that $G$ is a minimum counterexample.
    Assume that $v$ is a cut-vertex of $G$, such that $v=G_1 \cap G_2$, $\abs {G_1} \le \abs {G_2}$ and $G_1$ is $2$-connected (we take a cut-vertex which is nearest to one of its endvertices).
    Let $V_1, V_2$ be the vertex sets of $G_1$ and $G_2$ without $v$.
    Then $G_1$ and $G_2$ are both $2$-edge-connected and thus $H(G_1) \ge H(C_{\abs{G_1}})$ and $H(G_2) \ge H(C_{\abs{G_2}})$ as $G$ was a minimum counterexample.
    Since the sequence $(d(u,v))_{u \in G_1}$ is dominated by the sequence when $G_1$ is a cycle, and $H(G_1) \ge H(C_{\abs{G_1}})$ is strict when $G_1$ is not a cycle, we can assume that $G_1$ is a cycle, i.e. $G_1=C_{\abs{G_1}}.$
    Since $G_2$ is ($2$-edge-)connected, the distances $(d(u,v))_{u \in G_2 \setminus v}$ are obviously dominated by $1,2,\ldots, \abs{G_2}-1,$ which would be the (impossible) case that $G_2$ is a path.
    Let $G'_2$ be the path $P_{\abs{G_2}}$, with $v$ one of its end-vertices.
    Let $G'=G'_2\cup G_1$ be this graph.
    Let $x=\floorfrac{\abs{G_1}}{2}$ and $y=\floorfrac{\abs{G_2}}{2}.$
    Here $y \ge x.$
    Consider the multiset $S$ containing all distances between vertices in $G_1=C_{\abs{G_1}}$, between vertices in $ C_{\abs{G_2}}$, and the distances $(d_{G'}(u',v'), v'\in V_1, u'\in V_2.$
    \begin{claim}\label{clm:1}
        The set $S$ contains every number from $1$ to $y-1$ at least $n$ times.  
        Every number which is at least $y+1$ appears at most $\abs{G_1}-1< \frac n2$ many times in $S$.
        %Furthermore, for $i \ge y+1,$ the multiplicity of $i$ is bounded by the multiplicity of $i+1$ plus $4.$
    \end{claim}
    \begin{claimproof}
        The distances $1$ to $x-1$ all appear $\abs{G_1}$ times in $G_1$. Furthermore $x$ appears $x$ times if $\abs{G_1}$ is even, and otherwise $\abs{G_1}$ many times.
        The distances $1$ to $y-1$ all appear $\abs{G_2}$ times in $C_{\abs{G_2}}.$
        If $x=y$, we are already done with the first part. So assume otherwise.
        If $\abs{G_1}=2x$ is even, every vertex except from $2$ ($v$ and the one diametrically opposite to $v$ in $G_1$) have at least one vertex in $G'_2$ which is at distance $x.$
        Since $\abs{G_1}-2=2x-2\ge x$ (note that $\abs{G_1}=2$ is impossible), $x$ appears at least $\abs{G_2}+2x>n$ many times among $s.$
        For every vertex $v' \in V_1$, every distance in $[x+1,y-1]$ appears exactly once as the distance towards a vertex of $G'_2.$ 

        Next, we consider distances above $y$.
        Remember that $G_1=C_{\abs{G_1}}, C_{\abs{G_2}}$ have diameter bounded by $y$ and since $G'_2$ is a path, for every $i>y$ and $v'\in V_1,$ there is at most one $u'\in V_2$ for which $d_{G'}(u',v')=i.$
    \end{claimproof}
    Let $T$ be the multiset of distances between vertices in $C_n$.
    Then trivially, $\abs T=\abs S.$
    If $n$ is odd, $T$ contains exactly the number from $1$ up to $\floorfrac n2>y$, each with multiplicity $n$.
    If $n=2k$ is even, $T$ contains $1$ up to $k-1$ with multiplicity $n$ and $k$ with multiplicity $k>\abs{G_1}-1.$
    This implies by Claim~\ref{clm:1} that $\max \{S \setminus T\} < \min \{ T \setminus S\}$ and thus
    $$\sum_{i \in S \setminus T} \frac 1i > \sum_{i \in T \setminus S} \frac 1i. $$
    From this, we conclude that 
    $W(C_n)=\sum_{i \in T} \frac 1i <\sum_{i \in S} \frac 1i \le W(G). $
\end{proof}

\section*{Conclusion}

We found a mistake in a proof that was used twice in further work, and fixed the proofs.
In each of these cases, the main result was true and the cycle $C_n$ is an extremal Eulerian graph for the studied topological indices. Alternative proofs could be derived from known cases by noting that an Eulerian graph is always $2$-edge-connected, and contain a unicyclic subgraph.
We want to raise awareness to authors that, especially in a field where many related questions can be posed, mathematicians should try to fully understand the underlying principles before publishing, to avoid creating an abundancy of papers that makes it harder for e.g. engineers to find the crucial insights.

\section*{Acknowledgement}
The author thanks Ivan Gutman and Jan Goedgebeur for some suggestions on the presentation of the corrigendum.

\bibliographystyle{abbrv}
\bibliography{ref}

\end{document}